\theoremstyle{definition}
\newtheorem{lemma}{Lemma}[section]
\newtheorem{theorem}[lemma]{Theorem}
\newtheorem{corollary}[lemma]{Corollary}
\newtheorem{remark}{Remark}
\newtheorem{claim}{Claim}
\numberwithin{equation}{section}
\DeclareFixedFont{\Acknowledgment}{OT1}{cmr}{bx}{n}{14pt}
\begin{document}

\title{\bf Positive Solutions of $p$-th Yamabe Type Equations on Infinite Graphs}
\author{Xiaoxiao Zhang, Aijin Lin}
\date{}
\maketitle

\begin{abstract}
Let $G=(V,E)$ be a connected infinite and locally finite weighted graph, $\Delta_p$ be the $p$-th discrete graph Laplacian. In this paper, we consider the $p$-th Yamabe type equation
$$-\Delta_pu+h|u|^{p-2}u=gu^{\alpha-1}$$
on $G$, where $h$ and $g$ are known, $2<\alpha\leq p$. The prototype of this equation comes from the smooth Yamabe equation on an open manifold. We prove that the above equation has at least one positive solution on $G$.
\end{abstract}
\vspace{12pt}
\let\thefootnote\relax \footnotetext {The first author
is supported by the National Natural Science Foundation of China
(Grant No. 11431003). The second author is supported by the National Natural Science Foundation of China
(Grant No. 11401578).}

\section{Introduction}

Recently, the investigations of discrete weighted Laplacians and various equations on graphs have attracted much attention (cf. \cite{GLY,GLY2,GLY3,Ge1,Ge2,Ge3,Ge4,ZL}). Grigor'yan, Lin and Yang \cite{GLY3} first studied a Yamabe type equation on a finite graph $G$ as follows
 \begin{equation}\label{YE1}
 -\Delta u+hu=|u|^{\alpha-2}u,\quad \alpha>2
 \end{equation}
  where $\Delta$ is a usual discrete graph Laplacian, and $h$ is a positive function defined on the vertices of $G$. They show that the above equation (\ref{YE1}) always has a positive solution. Inspired by their work, Ge and Jiang \cite{Ge4} studied the following Yamabe type equation on an infinite graph, that is
  \begin{equation}\label{alpha>p}
    -\Delta_pu+hu^{p-1}=gu^{\alpha-1},\ u>0
  \end{equation}
  where $\Delta_{p}$ is $p$-th discrete graph Laplacian.\par
 Now, we recall the main result in \cite{Ge4}.
 \begin{theorem}\label{GJ}(\cite{Ge4}, Theorem 1)
  Consider the $p$-th Yamabe equation (\ref{alpha>p}) on a connected, infinite and locally finite graph $G$ with $\alpha>p\geq2$. Assume $g\geq0$ and $g$ is bounded from above, $h$ satisfies $\inf_{x\in V}h(x)>0$ and $\inf_{x\in V}h(x)\mu(x)>0$. Further assume $h^{-1}\in L^{\delta}(V)$ for some $\delta>0$ (or $h(x)\rightarrow\infty$ when $x\rightarrow\infty$), then (\ref{alpha>p}) has a positive solution.
  \end{theorem}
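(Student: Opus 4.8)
The plan is to produce a solution of (\ref{alpha>p}) by the direct method in the calculus of variations. I would first fix the weighted Sobolev space
\[
H=\Big\{u\colon V\to\mathbb{R}\ :\ \|u\|_H^p:=\int_V|\nabla u|^p\,d\mu+\int_V h\,|u|^p\,d\mu<\infty\Big\}.
\]
Since $\inf_V h>0$ and $1<p<\infty$, $H$ is a uniformly convex, hence reflexive, Banach space; and since $c:=\inf_{x\in V}h(x)\mu(x)>0$, every point evaluation is bounded, $|u(x)|^p\le\|u\|_H^p/(h(x)\mu(x))\le c^{-1}\|u\|_H^p$, which will later turn weak convergence into pointwise convergence and supply a uniform $L^\infty$-bound. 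Because $\alpha>p$ and $g\ge0$ is bounded (and positive somewhere, as is needed for the statement), it is natural to minimize the energy on a constraint:
\[
\lambda:=\inf\Big\{\,\|u\|_H^p\ :\ u\in H,\ \int_V g\,|u|^\alpha\,d\mu=1\,\Big\}.
\]
The admissible set is non-empty — rescale the indicator of a vertex where $g>0$ — and $\lambda>0$, since $\|u\|_H=0$ forces $u\equiv0$, which is incompatible with the constraint.

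I would then run the direct method. A minimizing sequence $(u_n)$ is bounded in $H$, say by $M$, so after passing to a subsequence $u_n\rightharpoonup u$ in $H$, and, by the point-evaluation bound, $u_n(x)\to u(x)$ for every $x\in V$; weak lower semicontinuity of the convex continuous norm gives $\|u\|_H^p\le\liminf_n\|u_n\|_H^p=\lambda$. The decisive step is that the constraint survives the limit, i.e. $\int_V g\,|u_n|^\alpha\,d\mu\to\int_V g\,|u|^\alpha\,d\mu$; equivalently, $H\hookrightarrow L^\alpha(V,g\,d\mu)$ is compact. This is exactly where the hypotheses on $h$ are used: in either of their forms they force $h(x)\to\infty$ as $x\to\infty$ (if $h^{-1}\in L^\delta(V)$ the general term of the series tends to $0$, and combined with $\inf_V h\mu>0$ the same conclusion holds in the $\mu$-weighted reading), so for $\varepsilon>0$ the finite set $\Omega=\{x:h(x)\le R\}$ satisfies, using $\|u_n\|_\infty\le(M/c)^{1/p}$ and $g$ bounded,
\[
\sum_{x\notin\Omega}\mu(x)g(x)|u_n(x)|^\alpha\ \le\ \|g\|_\infty\Big(\tfrac{M}{c}\Big)^{\frac{\alpha-p}{p}}\frac1R\sum_{x\notin\Omega}h(x)\mu(x)|u_n(x)|^p\ \le\ \frac{\|g\|_\infty M}{R}\Big(\tfrac{M}{c}\Big)^{\frac{\alpha-p}{p}},
\]
uniformly in $n$. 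Combining this uniform tail smallness with the pointwise convergence on the finite set $\Omega$ (and the analogous tail bound for the limit $u$, which also lies in $H$) yields the claim; hence $\int_V g\,|u|^\alpha\,d\mu=1$, so $u\not\equiv0$ is admissible and $\|u\|_H^p=\lambda$: $u$ is a minimizer.

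It remains to extract a positive solution from $u$. Replacing $u$ by $|u|$ does not increase $\|u\|_H$ (the difference quotients of $|u|$ are dominated by those of $u$) and keeps the constraint, so I may assume $u\ge0$. Both $u\mapsto\|u\|_H^p$ and $u\mapsto\int_V g\,|u|^\alpha\,d\mu$ are $C^1$ on $H$ (every sum involved is locally finite), so the Lagrange multiplier rule together with summation by parts gives, pointwise on $V$,
\[
-\Delta_p u+h\,u^{p-1}=\tfrac{\alpha}{p}\,\Lambda\,g\,u^{\alpha-1};
\]
testing against $u$ and using $\|u\|_H^p=\lambda$ and $\int_V g\,u^\alpha\,d\mu=1$ identifies $\Lambda=p\lambda/\alpha>0$, so in fact $-\Delta_p u+h\,u^{p-1}=\lambda\,g\,u^{\alpha-1}$. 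Since $\Delta_p(tu)=t^{p-1}\Delta_p u$ and $\tfrac{p-1}{\alpha-p}+1=\tfrac{\alpha-1}{\alpha-p}$, the rescaled function $v:=\lambda^{1/(\alpha-p)}u\ge0$ solves $-\Delta_p v+h\,v^{p-1}=g\,v^{\alpha-1}$. For strict positivity I invoke a discrete strong maximum principle: if $v(x_0)=0$, the equation at $x_0$ forces $\Delta_p v(x_0)=0$, i.e. $\sum_{y\sim x_0}w_{x_0y}\,v(y)^{p-1}=0$, so $v$ vanishes on the neighbours of $x_0$, and by connectedness of $G$ on all of $V$ — impossible since $\int_V g\,v^\alpha\,d\mu>0$. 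Thus $v>0$ everywhere and solves (\ref{alpha>p}).

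The step I expect to be the main obstacle is the compactness of the embedding $H\hookrightarrow L^\alpha(V,g\,d\mu)$: on an infinite graph the bare inclusion $\ell^p\hookrightarrow\ell^\alpha$ is continuous but never compact, and it is precisely the growth/integrability assumption on $h$ (together with $\inf_V h\mu>0$) that restores compactness by making the tails uniformly small. The remaining ingredients — reflexivity and weak lower semicontinuity, the Lagrange multiplier rule, summation by parts, and the discrete maximum principle — are routine on a connected, locally finite weighted graph.
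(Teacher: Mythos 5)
Your proposal is sound, but note first that the paper contains no proof of Theorem~\ref{GJ}: it is imported verbatim from \cite{Ge4} (a private communication) and used as a black box, so there is nothing in-paper to compare against line by line. What can be compared is the paper's proof of its own Theorem~\ref{theorem2} (the companion case $2<\alpha\le p$), and your argument follows the same constrained-minimization scheme: minimize $J(u)=\|u\|_{\mathcal{H}}^p$ under a normalization of $\int_V g|u|^{\alpha}\,d\mu$, use $\inf_V h\mu>0$ to get a uniform pointwise bound and pointwise convergence of a minimizing sequence, prevent the constraint from leaking to infinity, derive the Euler--Lagrange equation with a Lagrange multiplier, apply a discrete strong maximum principle, and absorb the multiplier by rescaling $u\mapsto\lambda^{1/(\alpha-p)}u$. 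The genuine point of divergence is the tail estimate, and your treatment is the right one for $\alpha>p$: the interpolation $|u_n|^{\alpha}\le\|u_n\|_{\infty}^{\alpha-p}|u_n|^{p}$ together with $h(x)\to\infty$ (which you correctly extract from $h^{-1}\in L^{\delta}$ combined with $\inf h\mu>0$) gives a uniform $O(1/R)$ tail needing only $\delta>0$; for $\alpha\le p$ this interpolation goes the wrong way, which is exactly why the paper instead applies H\"older against $h^{-\alpha/(p-\alpha)}$ (resp.\ $h^{-1/(p-2)}$) and must impose the stronger restriction $\delta<\frac{1}{p-2}$. Your symmetrization $u\mapsto|u|$ is also a small simplification over the paper's device of truncating the nonlinearity at $s<0$ and then proving nonnegativity via the maximum principle. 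Two caveats, both shared with the paper's own exposition: the statement is vacuous unless $g\not\equiv0$ (you flag this correctly), and the $C^1$-ness of the constraint functional, the Lagrange rule, and summation by parts on an infinite graph are asserted rather than verified --- the paper devotes Claim 1 to the first of these and still only obtains it on an $L^{\infty}$ ball, so in a full write-up you should either do likewise or derive the Euler--Lagrange equation by testing with finitely supported $\varphi$, which suffices by local finiteness.
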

 From this result, one still needs to know:\\
 \centerline{Can one solve the $p$-th Yamabe equation (\ref{alpha>p}) under the assumption $2<\alpha\leq p$ ?}\par
 The main purpose of this paper is to answer the above question. This paper is organized as follows: In section 2, we give some notations on graph and state main results. Existence of positive solutions in an infinite graph is proved in section 3.

\section{Settings and main results}
First let's recall basic definitions for weighted graphs. Let $G=(V,E)$ be a locally finite graph, where $V$ denotes the vertex set and $E$ denotes the edge set, $\omega:V\times V\ni(x,y)\mapsto\omega_{xy}\in[0,\infty)$ be an edge weight function satisfying
\begin{itemize}
\item $\omega_{xy}=\omega_{yx},\quad\quad \forall x,y \in V,$
\item $\sum_{y\in V}\omega_{xy}<\infty, \quad\quad \forall x\in V,$
\end{itemize}
and $\mu: V\ni x \mapsto \mu(x)\in(0,\infty)$ be a measure on $V$ of full support, and for $x,y\in V, \{x,y\}\in E$ if and only if $\omega_{xy}>0$, in symbols $x\thicksim y$. Alternatively, $\omega_{xy}$ can be considered as a positive function on the set $E$ of edges, that is extended to be $0$ on non-edge pairs $(x,y)$. Note that $G=(V,E)$ possibly possesses self-loops.
We say that a graph is locally finite if for any $x\in V$, there holds $\sum_{y\sim x}1<\infty$. Throughout this paper, we denote $C_{G,h,\cdots}$ as some positive constant depending only on the information of $G,H,\cdots$. Note that the information of $G$ contains $V,E,\mu$ and $\omega$.

For any function $u:V\rightarrow\mathbb{R}$, the $\mu$-Laplacian (or Laplacian for short)
of $u$ is defined as
\begin{equation*}
\Delta u(x)=\frac{1}{\mu(x)}\sum_{y\sim x}w_{xy}(u(y)-u(x)).
\end{equation*}
 Denote $C(V)$ as the set of all real functions defined on $V$, then when $V$ is an infinite (a finite) set, $C(V)$ is an infinite (a finite) dimensional linear space with the usual functions additions and scalar multiplications.
When $p\geq2$,  the genelized discrete graph $p$-Laplacian $\Delta_p:C(V)\rightarrow C(V)$ is defined as
\begin{equation}\label{p-LDef}
\Delta_p f(x)=\frac{1}{\mu (x)}\sum_{y\thicksim x}\omega_{xy}\left|f(y)-f(x)\right|^{p-2}(f(y)-f(x)),
\end{equation}
for $f\in C(V)$ and $x\in V$.
With respect to the vertex weight $\mu$, the integral of $f$ over $V$ is defined by
$$\int_Vfd\mu=\sum\limits_{x\in V}\mu(x)f(x),$$
for any $f\in C(V)$. Set $\mathrm{Vol}(G)=\int_Vd\mu$. Note that $f$ may not be integrable generally. Denote $L^q(V)$ as the space of all $q$-th integrable functions on $V$.

We define a space of functions
 \begin{equation}
 \mathcal{H}=\left\{u\in L^p(V): \int_V \left(|\nabla_p u|^p+h|u|^p\right)d\mu <+\infty\right\}
 \end{equation}
 with a norm
 \begin{equation}\|u\|_{\mathcal{H}}=\left(\int_V\left(|\nabla_p u|^p+h|u|^p\right)d\mu\right)^{1/p},
 \end{equation}
where $|\nabla_p u|$ is defined as
\begin{equation}\label{ptd}
 |\nabla_p u(x)|=\Big(\frac{1}{2\mu(x)}\sum_{y\thicksim x}\omega_{xy}\big|u(y)-u(x)\big|^p\Big)^{1/p},
\end{equation}
which implies the following identity
$$\int_V |\nabla_p u|^p d\mu=\sum_{y\thicksim x}\omega_{xy}\big|u(y)-u(x)\big|^p.$$
Now we state our main result as follows
\begin{theorem}\label{theorem2}
Let $G=(V,E)$ be a connected, locally finite and infinite graph. Assume that its weight satisfies $\omega_{xy}=\omega_{yx}$ for all $y\thicksim x\in V,$ and that its measure $\mu(x)>0$ for all $x\in V$. Suppose $2<\alpha\leq p$, $g\geq0$ and $g$ is bounded from above, $h$ satisfies $\inf_{x\in V}h(x)>0$ and $\inf_{x\in V}h(x)\mu(x)>0$. Further assume $h^{-1}\in L^{\delta}(V)$ for some $0<\delta<\frac{1}{p-2}$, then the
equation
\begin{equation}\label{equation2}
-\Delta_pu+h|u|^{p-2}u=gu^{\alpha-1}
\end{equation}
has a positive solution.
\end{theorem}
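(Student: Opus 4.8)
The plan is to argue variationally: I would produce a positive solution of (\ref{equation2}) as a suitable positive multiple of a minimizer $u_0$ of $\|u\|_{\mathcal H}^p$ over the constraint set $\mathcal M=\{u\in\mathcal H:\int_V g|u|^\alpha\,d\mu=1\}$, equivalently of the quotient $\|u\|_{\mathcal H}^p\big/\bigl(\int_V g|u|^\alpha\,d\mu\bigr)^{p/\alpha}$, and denote the infimum by $S$. We may assume $g\not\equiv0$, so that $\mathcal M\neq\emptyset$, every finitely supported function belonging to $\mathcal H$. It is convenient to record at the outset that $\mathcal H$, being isometric to a closed subspace of $\ell^p(E,\omega)\oplus L^p(V,h\,d\mu)$, is a uniformly convex, hence reflexive, Banach space, so bounded sequences admit weakly convergent subsequences and $u\mapsto\|u\|_{\mathcal H}^p$ is weakly lower semicontinuous; moreover $\big||a|-|b|\big|\le|a-b|$ gives $|\nabla_p|u||\le|\nabla_p u|$ pointwise, so passing from $u$ to $|u|$ keeps a function in $\mathcal M$ without increasing $\|u\|_{\mathcal H}$.

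The heart of the argument, and the step I expect to be the main obstacle, is a compact-embedding property: \emph{if $u_k\rightharpoonup u$ weakly in $\mathcal H$, then $\int_V g|u_k|^\alpha\,d\mu\to\int_V g|u|^\alpha\,d\mu$}. All three hypotheses on $h$ enter here. From $\inf_x h(x)\mu(x)=:c_0>0$ one gets $c_0\sum_x|u(x)|^p\le\|u\|_{\mathcal H}^p$, so $\mathcal H\hookrightarrow\ell^p(V)$ continuously; hence a weakly convergent sequence in $\mathcal H$ is bounded in $\ell^p(V)$ and, along a subsequence, converges pointwise on $V$. For the tails I would write $\mu|u|^\alpha=(\mu h|u|^p)^{\alpha/p}\cdot\mu^{1-\alpha/p}h^{-\alpha/p}$ and apply Hölder's inequality with exponents $p/\alpha$ and $p/(p-\alpha)$ to obtain, for every finite $\Omega\subset V$,
\[
\sum_{x\notin\Omega}\mu(x)g(x)|u_k(x)|^\alpha \le \|g\|_\infty\,\|u_k\|_{\mathcal H}^\alpha\Bigl(\sum_{x\notin\Omega}\mu(x)h(x)^{-\frac{\alpha}{p-\alpha}}\Bigr)^{\frac{p-\alpha}{p}}.
\]
Since $2<\alpha\le p$ forces $\tfrac{\alpha}{p-\alpha}>\tfrac{2}{p-2}>\tfrac1{p-2}>\delta$, and since $h\ge\inf h>0$, one has $h^{-\alpha/(p-\alpha)}\le C\,h^{-\delta}$, whence $\sum_x\mu(x)h(x)^{-\alpha/(p-\alpha)}\le C\sum_x\mu(x)h(x)^{-\delta}<\infty$ because $h^{-1}\in L^\delta(V)$. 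Thus the displayed tail is small uniformly in $k$ once $\Omega$ is large, which together with pointwise convergence on finite subsets yields the stated convergence; the case $\alpha=p$ is treated similarly and more easily, using that the hypotheses readily imply $\{x:h(x)\le M\}$ is finite for every $M>0$. The same Hölder estimate also shows $\mathcal H\hookrightarrow L^q(V,\mu)$ for every $q\in[\tfrac p{p-1},p]$, so $u\mapsto\int_V g|u|^\alpha\,d\mu$ is of class $C^1$ on $\mathcal H$; this is precisely where the restriction $\delta<\tfrac1{p-2}$ is used, $\tfrac1{p-2}$ being the critical exponent, approached as $\alpha\downarrow2$.

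Granting the compactness, I would conclude as follows. First $S>0$, since on $\mathcal M$ one has $1=\int_V g|u|^\alpha\,d\mu\le\|g\|_\infty\|u\|_{L^\alpha(\mu)}^\alpha\le C\|u\|_{\mathcal H}^\alpha$. Choosing a minimizing sequence, replacing each term by its modulus (which stays in $\mathcal M$ and does not increase the norm), and extracting a weak limit $u_0\ge0$, weak lower semicontinuity gives $\|u_0\|_{\mathcal H}^p\le S$ and the compactness gives $\int_V g u_0^\alpha\,d\mu=1$, so $u_0\not\equiv0$ attains $S$. Varying $u_0$ against a finitely supported test function $\varphi$---which alters only finitely many edge terms because $G$ is locally finite, so differentiation under the sum is immediate---and summing by parts, the stationarity of the quotient at $u_0$ gives $-\Delta_p u_0+h u_0^{p-1}=\Lambda\,g\,u_0^{\alpha-1}$ for a constant $\Lambda$, and testing with $u_0$ forces $\Lambda=S>0$. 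When $\alpha<p$, the rescaled function $\bar u:=S^{1/(p-\alpha)}u_0$ solves (\ref{equation2}) itself, since $1-\tfrac{p-1}{p-\alpha}=-\tfrac{\alpha-1}{p-\alpha}$; when $\alpha=p$ the equation is $p$-homogeneous and one argues directly with the eigenvalue normalization. Finally, $\bar u\ge0$ is strictly positive: if $\bar u(x_0)=0$ for some $x_0$, then evaluating (\ref{equation2}) at $x_0$ gives $-\Delta_p\bar u(x_0)=0$, i.e.\ $\sum_{y\sim x_0}\omega_{x_0y}\,\bar u(y)^{p-1}=0$, so $\bar u$ vanishes at every neighbour of $x_0$; connectedness of $G$ then propagates this to $\bar u\equiv0$, contradicting $\int_V g\bar u^\alpha\,d\mu>0$. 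Hence $\bar u>0$ on $V$ is the desired positive solution.
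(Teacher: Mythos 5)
Your route is essentially the paper's: both minimize $\|u\|_{\mathcal H}^p$ over a normalization of $\int_V g|u|^\alpha\,d\mu$, and your key tail estimate (H\"older with exponents $p/\alpha$ and $p/(p-\alpha)$ followed by $h^{-\alpha/(p-\alpha)}\le C\,h^{-\delta}$) is exactly the inequality the paper uses to show that the limit of the minimizing sequence still satisfies the constraint. Your technical packaging is if anything cleaner: replacing the minimizing sequence by its moduli settles nonnegativity at the outset (the paper's argument that $\bar u\ge 0$ is murkier), and working with the unconstrained quotient avoids Lagrange-multiplier technicalities. Two corrections in the subcritical case $2<\alpha<p$: the rescaling exponent has the wrong sign --- if $-\Delta_pu_0+hu_0^{p-1}=S\,g\,u_0^{\alpha-1}$, then $\bar u=c\,u_0$ satisfies $-\Delta_p\bar u+h\bar u^{p-1}=c^{\,p-\alpha}S\,g\,\bar u^{\alpha-1}$, so you need $c=S^{-1/(p-\alpha)}$ rather than $S^{1/(p-\alpha)}$ --- and the standing assumption $g\not\equiv 0$ (which you do record) is indispensable, since the statement is false for $g\equiv 0$.

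The genuine gap is the endpoint $\alpha=p$, which the theorem includes. There, ``arguing directly with the eigenvalue normalization'' does not work: by $p$-homogeneity no rescaling of $u_0$ changes the constant in $-\Delta_pu_0+hu_0^{p-1}=S\,g\,u_0^{p-1}$, and renormalizing the constraint does not help either --- if you minimize over $\{\int_V g|u|^p\,d\mu=t\}$, the minimizer scales like $t^{1/p}$ and the resulting multiplier is always the principal eigenvalue $S=\inf \|u\|_{\mathcal H}^p/\int_V g|u|^p\,d\mu$, independent of $t$. So the variational method yields a positive solution of $-\Delta_pu+hu^{p-1}=gu^{p-1}$ only when $S=1$, which fails for generic data (replacing $g$ by $cg$ replaces $S$ by $S/c$; and for $g\le\inf_V h$ a formal summation of the equation over $V$ even suggests nonexistence). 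To be fair, the paper's own device for $\alpha=p$ --- inserting a parameter $\theta$ into the constraint and then ``choosing $\theta=1/\lambda$'' --- is circular for the same reason: one can check that $\lambda\theta$ is independent of $\theta$. In short, your proposal is sound and matches the paper's argument for $2<\alpha<p$, but neither your sketch nor the paper actually establishes the case $\alpha=p$.
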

\begin{remark}
Grigor'yan, Lin and Yang \cite{GLY3} studied the following equation
$$-\Delta_pu+h|u|^{p-2}u=f(x,u),\ p>1$$
and established some existence results under certain assumptions of $f(x,u)$. However, it is remarkable that the definition of discrect graph $p$-Laplacian $\Delta_p$ in \cite{GLY3} is different from our definition in (\ref{p-LDef}) when $p\neq2$. Moreover, under their assumptions, $f(x,u)$ can not choose $gu^{\alpha-1}$ for $2<\alpha\leq p$.
\end{remark}
Combining Theorem \ref{GJ} with Theorem \ref{theorem2}, we have
\begin{corollary}
  Let $G=(V,E)$ be a connected, locally finite and infinite graph. Assume that its weight satisfies $\omega_{xy}=\omega_{yx}$ for all $y\thicksim x\in V,$ and that its measure $\mu(x)>0$ for all $x\in V$. Suppose $\alpha>2$, $p\geq2$, $g\geq0$ and $g$ is bounded from above, $h$ satisfies $\inf_{x\in V}h(x)>0$ and $\inf_{x\in V}h(x)\mu(x)>0$. Further assume $h^{-1}\in L^{\delta}(V)$ for some $0<\delta<\frac{1}{p-2}$, then the
equation
\begin{equation*}
-\Delta_pu+h|u|^{p-2}u=gu^{\alpha-1}
\end{equation*}
has a positive solution.
\end{corollary}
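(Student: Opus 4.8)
The plan is to solve \eqref{equation2} variationally, producing the positive solution as a rescaling of a minimizer of the energy $u\mapsto\|u\|_{\mathcal H}^p$ over the constraint set $\mathcal N=\{u\in\mathcal H:\int_V g|u|^\alpha\,d\mu=1\}$ --- equivalently, when $\alpha<p$, of the coercive functional $J(u)=\frac1p\|u\|_{\mathcal H}^p-\frac1\alpha\int_V g|u|^\alpha\,d\mu$ on $\mathcal H$. Two preliminaries should be recorded: that $\mathcal H$ with $\|\cdot\|_{\mathcal H}$ is a reflexive Banach space for $p>1$ (the linear isometry $u\mapsto(\nabla_p u,\,h^{1/p}u)$ exhibits $\mathcal H$ as a closed subspace of a product of $L^p$-spaces, so it inherits completeness and uniform convexity), and that $\int_V g|u|^\alpha\,d\mu<\infty$ on $\mathcal H$ because $g$ is bounded above and, by the key embedding below, $\mathcal H\hookrightarrow L^\alpha(V)$. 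Since $J(tu)<0$ for small $t>0$ whenever $g\not\equiv0$, the infimum is strictly negative and any minimizer is automatically nontrivial; throughout we assume $g\not\equiv0$, which is also what permits the normalization $\int_V g|u|^\alpha\,d\mu=1$.

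The crux --- and the step I expect to be the main obstacle --- is the compact embedding $\mathcal H\hookrightarrow\hookrightarrow L^\alpha(V)$. The hypotheses $\inf_V h>0$, $\inf_V(h\mu)>0$ and $h^{-1}\in L^\delta(V)$ with $0<\delta<\frac1{p-2}$ are precisely what prevents mass of a bounded sequence from escaping to infinity: because $h^{-1}\in L^\delta$, each sublevel set $\{h\le R\}$ has finite $\mu$-measure, so for a bounded sequence $\{u_n\}\subset\mathcal H$ a H\"older interpolation between $L^p(d\mu)$ and the weighted space $L^p(h\,d\mu)$ makes the tails $\sum_{h(x)>R}\mu(x)|u_n(x)|^\alpha$ small uniformly in $n$. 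Combining this uniform tightness with the pointwise convergence $u_n(x)\to u_0(x)$ at each vertex along a weakly convergent subsequence --- evaluation at a vertex being a bounded functional on $\mathcal H$ since $h\mu>0$ --- upgrades $u_n\rightharpoonup u_0$ in $\mathcal H$ to $u_n\to u_0$ strongly in $L^\alpha(V)$; the quantitative bound $\delta<\frac1{p-2}$ is what lets the exponents in the interpolation close up against the subcritical power $u^{\alpha-1}$.

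Granting this, the remaining steps are routine. A minimizing sequence is bounded in $\mathcal H$ by coercivity of $J$ (resp. by the constraint), so a subsequence satisfies $u_n\rightharpoonup u_0$ in $\mathcal H$ and $u_n\to u_0$ in $L^\alpha(V)$; then $\int_V g|u_n|^\alpha\to\int_V g|u_0|^\alpha$ (as $g$ is bounded) while $\|u_0\|_{\mathcal H}^p\le\liminf_n\|u_n\|_{\mathcal H}^p$ by weak lower semicontinuity of the norm, so $u_0$ is a minimizer, and it is nontrivial. Replacing $u_0$ by $|u_0|$ does not increase $\|\cdot\|_{\mathcal H}$ (by $\big|\,|a|-|b|\,\big|\le|a-b|$ in \eqref{ptd}) and does not change $\int_V g|u_0|^\alpha\,d\mu$, so we may assume $u_0\ge0$. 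Because $G$ is locally finite, each Dirac mass lies in $\mathcal H$; testing the Euler--Lagrange relation against Dirac masses vertex by vertex turns it into the pointwise equation $-\Delta_p u_0+h u_0^{p-1}=\lambda\,g\,u_0^{\alpha-1}$, where $\lambda=\|u_0\|_{\mathcal H}^p$ is the Lagrange multiplier (found by testing against $u_0$) and $\lambda=1$ in the free formulation. For $2<\alpha<p$ the rescaling $v:=\lambda^{1/(\alpha-p)}u_0\ge0$ then solves \eqref{equation2} with coefficient $1$; the borderline $\alpha=p$, for which this rescaling is unavailable, is more delicate and must be handled via the associated eigenvalue problem for $\lambda$. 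Finally, strict positivity follows from connectedness: if $u_0(x_0)=0$ at some vertex, the equation at $x_0$ reads $\sum_{y\sim x_0}\omega_{x_0y}\,u_0(y)^{p-1}=0$, forcing $u_0$ to vanish at every neighbour of $x_0$ and hence, by connectedness of $G$, everywhere, contradicting nontriviality. Therefore $v>0$ is the desired solution of \eqref{equation2}.
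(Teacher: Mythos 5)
Your variational scheme is, in substance, the one the paper itself uses: the Corollary is obtained there by splitting into $\alpha>p$ (quoted from Ge--Jiang, Theorem \ref{GJ}) and $2<\alpha\le p$ (Theorem \ref{theorem2}), and in both regimes the proof is exactly a constrained minimization of $J(u)=\|u\|_{\mathcal H}^p$ over a normalization of $\int_V g\,u_+^\alpha\,d\mu$, with the hypotheses $\inf_V h>0$, $\inf_V h\mu>0$ and $h^{-1}\in L^\delta(V)$ used precisely as you use them: the second gives the uniform sup bound on the minimizing sequence, and the third kills the tails of $\int g u_i^\alpha\,d\mu$ via H\"older interpolation against $\int_V h|u_i|^p\,d\mu$ (your exponent bookkeeping $\frac{\alpha}{p-\alpha}>\delta$ is the paper's inequality preceding (\ref{p>}), and (\ref{gj}) is the $\alpha=p$ analogue). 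Your packaging through reflexivity of $\mathcal H$ and a compact embedding into $L^\alpha(V)$ is a cleaner route to the same conclusion than the paper's pointwise convergence plus Fatou, the replacement of $u_0$ by $|u_0|$ plays the role of the paper's one-sided truncation $G(x,s)$, and your positivity argument at a zero of $u_0$ is the argument of Claim \ref{positive}. All of that is sound.

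The genuine gap is the case $\alpha=p$, which you explicitly set aside (``more delicate \dots must be handled via the associated eigenvalue problem'') but which lies inside the Corollary's range ($\alpha>2$, $p\ge2$, with no exclusion of $\alpha=p$); indeed it is exactly the case that Theorem \ref{theorem2} was written to add to Theorem \ref{GJ}. When $\alpha=p$ both sides of the Euler--Lagrange equation are $(p-1)$-homogeneous, so no rescaling $v=c\,u_0$ can normalize the Lagrange multiplier $\lambda$ to $1$, and your construction does not yield a solution of the stated equation. The paper's device is to insert a free parameter $\theta$ into the constraint, $K(u)=\theta\int_V g\,u_+^p\,d\mu=1$, and to choose $\theta=1/\lambda$ at the end so that the multiplier is absorbed. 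You should note, however, that your instinct about this case is well founded: by $p$-homogeneity of both $J$ and the constraint, the product $\lambda\theta$ is independent of $\theta$ (it equals the infimum $\gamma$ taken at $\theta=1$), so the prescription $\theta=1/\lambda$ is self-consistent only when that infimum equals $1$; the $\alpha=p$ problem really is of eigenvalue type. Whatever one concludes about the paper's own treatment, your proof as written establishes the Corollary only for $\alpha\neq p$ and is therefore incomplete.
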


\section{Proof of Theorem \ref{theorem2}}
For each $u\in \mathcal{H}$, we define a functional
$$J(u)=\int_V (|\nabla_pu|^p+h|u|^p)d\mu.$$
It is easy to see that $J(u)=\|u\|_{\mathcal{H}}^p$ and $J$ is continuously differentiable. For any positive $\theta$, set
\begin{equation*}
G(x,s)=\begin{cases}g(x)\theta s^{\alpha}\ &s\geq0,\\0 & s<0.\end{cases}
\end{equation*}
It is continuously differentiable with respect to $s$ with $\partial_s G(x,s)=\alpha g(x)\theta s^{\alpha-1} $ when $s\geq0$ and $\partial_s G(x,s)=0$ when $s<0$. We write $G'(x, s)=\partial_sG(x,s)$ for short.
Now consider the following functional
\begin{equation}\label{K}
K(u)=\int_V G(x,u)d\mu,\ u\in \mathcal{H}.
\end{equation}

\begin{claim}
The function $K$, defined in (\ref{K}), is continuously differentiable on $\mathcal{H}$ for all $|u|\leq C$, where $C$ is some constant independent of $u$.
\end{claim}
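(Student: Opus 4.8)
The plan is to show that $K$ is Gateaux differentiable with derivative $\langle K'(u),\varphi\rangle = \int_V G'(x,u)\varphi\, d\mu$ and that the map $u\mapsto K'(u)$ is continuous from the ball $\{\|u\|_{\mathcal H}\le C\}$ into $\mathcal H^*$; continuous Gateaux differentiability then upgrades to continuous Fréchet differentiability by a standard result. First I would record the pointwise embedding: since $\inf_{x\in V} h(x)\mu(x)>0$, for any $u\in\mathcal H$ and any $x$ we have $h(x)\mu(x)|u(x)|^p\le \int_V h|u|^p d\mu\le \|u\|_{\mathcal H}^p$, hence $|u(x)|\le C_{G,h}\|u\|_{\mathcal H}$ uniformly in $x$, i.e. $\mathcal H\hookrightarrow \ell^\infty(V)$ with a bound depending only on $G$ and $h$. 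In particular on the ball $\|u\|_{\mathcal H}\le C$ every $u$ satisfies $|u|\le C'$ for a fixed $C'=C'(C,G,h)$, which is the meaning of the phrase "for all $|u|\le C$" in the statement and lets us treat $G(x,\cdot)$ only on a bounded interval where it, together with $G'$, is Lipschitz in $s$ with constants proportional to $\sup_V g$.

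Next I would verify that $K$ is well defined and finite on this ball: $0\le G(x,u)= g(x)\theta u_+^\alpha \le \theta(\sup g)(C')^{\alpha-p}|u(x)|^p$, so $K(u)\le C_{G,g,\theta,C}\int_V h|u|^p\,d\mu\big/\inf(h)\le C_{G,g,h,\theta,C}\|u\|_{\mathcal H}^p<\infty$, using $\alpha\le p$ here in an essential way (this is exactly where the hypothesis $2<\alpha\le p$, as opposed to $\alpha>p$, is used — the nonlinearity is dominated by the $\|u\|_{\mathcal H}^p$-term rather than requiring a Sobolev-type gain). The same estimate with $G$ replaced by $G'$ shows $|G'(x,u)|\le C_{G,g,h,\theta,C}\, h(x)|u(x)|^{p-1}$ on the ball, whence by Hölder $\big|\int_V G'(x,u)\varphi\,d\mu\big|\le C \big(\int_V h|u|^p\big)^{(p-1)/p}\big(\int_V h|\varphi|^p\big)^{1/p}\le C\|u\|_{\mathcal H}^{p-1}\|\varphi\|_{\mathcal H}$, so the candidate derivative is a bounded linear functional on $\mathcal H$.

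Then I would compute the directional derivative: for fixed $u,\varphi\in\mathcal H$ and $t\to 0$, write $\frac{K(u+t\varphi)-K(u)}{t}=\int_V \frac{G(x,u+t\varphi)-G(x,u)}{t}\,d\mu$; the integrand converges pointwise to $G'(x,u)\varphi$ and, by the mean value theorem applied on the bounded range, is dominated by $C_{G,g,h,\theta,C}\, h(x)\big(|u(x)|+|\varphi(x)|\big)^{p-1}|\varphi(x)|$, which is summable over $V$ (again by Hölder in the weighted $\ell^p(h\,d\mu)$ spaces). Dominated convergence then gives $\langle K'(u),\varphi\rangle=\int_V G'(x,u)\varphi\,d\mu$. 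Finally, for continuity of $K'$, take $u_n\to u$ in $\mathcal H$; then $u_n\to u$ in $\ell^\infty(V)$ and pointwise, $G'(x,u_n)\to G'(x,u)$ pointwise, and $|G'(x,u_n)-G'(x,u)|$ is dominated (Lipschitz bound for $G'$ on the bounded range) by $C\,h(x)\big(|u_n(x)|^{p-2}+|u(x)|^{p-2}\big)|u_n(x)-u(x)|$; a Hölder estimate with exponents $\frac{p}{p-2},\,p,\,p$ bounds $\|K'(u_n)-K'(u)\|_{\mathcal H^*}$ by $C\big(\int_V h(|u_n|^p+|u|^p)\big)^{(p-2)/p}\|u_n-u\|_{\mathcal H}\to 0$. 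I expect the only genuinely delicate point to be producing the uniform-in-$x$ sup bound from $\mathcal H$ and assembling the right Hölder triple so that every tail sum is controlled by $\|u\|_{\mathcal H}$; once $\mathcal H\hookrightarrow\ell^\infty$ is in hand together with $\alpha\le p$, the rest is the routine dominated-convergence argument for $C^1$ Nemytskii-type functionals.
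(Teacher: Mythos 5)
Your overall architecture (Gateaux derivative plus continuity of $u\mapsto K'(u)$, with the $\ell^\infty$ embedding $|u(x)|\le C_{G,h}\|u\|_{\mathcal H}$ coming from $\inf_V h\mu>0$) is sound, and in fact more careful than the paper about why the phrase ``$|u|\le C$'' makes sense. But the estimates you build on top of it contain a sign error in the exponent comparison that breaks the argument precisely in the regime $2<\alpha<p$ that this paper is about. You claim $u_+^\alpha\le (C')^{\alpha-p}|u|^p$ on the bounded range, and likewise $u_+^{\alpha-1}\lesssim |u|^{p-1}$ and $u_+^{\alpha-2}\lesssim |u|^{p-2}$. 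These inequalities go the wrong way: for $0\le t\le C'$ and $\alpha\le p$ one has $t^{\alpha}=t^{\alpha-p}t^{p}\ge (C')^{\alpha-p}t^{p}$, since $t\mapsto t^{\alpha-p}$ is decreasing. (Check $t=C'/2$: $t^\alpha=(C')^\alpha 2^{-\alpha}$ while $(C')^{\alpha-p}t^p=(C')^\alpha 2^{-p}\le (C')^\alpha 2^{-\alpha}$.) A low power of $|u|$ cannot be dominated pointwise by a high power near $u=0$, and on an infinite graph the small values of $u$ are exactly where summability is at stake. Consequently your well-definedness bound for $K$, your boundedness bound $|G'(x,u)|\lesssim h|u|^{p-1}$, and above all your final continuity estimate via the H\"older triple $\frac{p}{p-2},p,p$ (which rests on $|G'(x,u_n)-G'(x,u)|\lesssim h(|u_n|^{p-2}+|u|^{p-2})|u_n-u|$) all fail when $\alpha<p$; they are valid only in the borderline case $\alpha=p$.

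The missing ingredient is the hypothesis $h^{-1}\in L^{\delta}(V)$ with $0<\delta<\frac{1}{p-2}$, which you never use and which is in the theorem precisely to compensate for the low power of the nonlinearity. The paper's route is: on the bounded range, $|G'(x,u_1)-G'(x,u_2)|\le C_{g,\alpha,\theta}|u_1-u_2|$ outright (using $|s_1^{\alpha-1}-s_2^{\alpha-1}|\le(\alpha-1)|s_1-s_2|(s_1^{\alpha-2}+s_2^{\alpha-2})$ and the sup bound on $s_i^{\alpha-2}$, with no attempt to trade $\alpha$ for $p$ in the exponent), and then the resulting quantity $\int_V|u_1-u_2||\xi|\,d\mu$ is controlled by two H\"older steps: first $\int_V|v|^{\frac{p}{p-1}}d\mu\le\big(\int_V h^{-1/(p-2)}d\mu\big)^{\frac{p-2}{p-1}}\big(\int_V h|v|^pd\mu\big)^{\frac{1}{p-1}}$, where $h^{-1/(p-2)}\in L^1$ because $h^{-1/(p-2)}\le (\inf h)^{-(\frac{1}{p-2}-\delta)}h^{-\delta}$; then $\int_V|v||\xi|d\mu\le\big(\int_V|v|^{\frac{p}{p-1}}d\mu\big)^{\frac{p-1}{p}}\big(\int_V|\xi|^pd\mu\big)^{\frac 1p}$. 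This yields $\|K'(u_1)-K'(u_2)\|_{\mathcal H^*}\le C\|u_1-u_2\|_{\mathcal H}$. To repair your write-up, keep your differentiation-under-the-integral framework but replace every domination of $u_+^{\alpha-k}$ by $|u|^{p-k}$ with either a constant bound (for the Lipschitz step) or a H\"older step against $h^{-1/(p-2)}$ or $h^{-\alpha/(p-\alpha)}$ (for the summability steps).
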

\begin{proof}
By direct calculation, the Fr\'{e}chet derivative of $K(u)$ is a $K'(u)\in \mathcal{H^*}$ with
$$\mathcal{H}\ni v\mapsto K'(u)(v)=\int_V G'(x,u(x))v(x)d\mu.$$
For any $x,y\geq0$, $1\leq a<\infty$, we have an elementary inequality
$$\big|x^a-y^a\big|\leq a|x-y|\big(x^{a-1}+y^{a-1}\big).$$
Note that $\alpha>2$. As $g$ is bounded, by the above elementary inequality, we have
$$|G'(x,u_1(x))-G'(x,u_2(x))|\leq \tilde{C}_{g,\alpha,\theta}|u_1^{\alpha-1}(x)-u_2^{\alpha-1}(x)|\leq C_{g,\alpha,\theta}|u_1(x)-u_2(x)|,$$
where we using the condition $|u|\leq C$ in last inequality.\par
For $\xi\in\mathcal{H}$, it follows
$$|(K'(u_1)-K'(u_2))\xi|\leq C_{g,\alpha,\theta}\int_V|u_1-u_2||\xi|d\mu.$$
Since $h^{-1}\in L^{\delta}(V)$ for some $0<\delta<\frac{1}{p-2}$, there exists some constant $C_{G,h}$, such that
$$\Big(\int_V \frac {1}{h^{\delta}}d\mu\Big)^{\delta}\leq C_{G,h}\ .$$
Denote $\inf_{x\in V}h(x)=C_h>0$. Note that $\frac{1}{p-2}-\delta>0$. For any $x\in V$, we have
\begin{equation}\label{gj}
 \frac{1}{h^{1/(p-2)}(x)}\leq \frac{1}{h^{\delta}(x)}\Big(\frac{1}{\inf_{x\in V}h(x)}\Big)^{\frac{1}{p-2}-\delta}\leq C_{G,\delta,p,h}\frac{1}{h^{\delta}(x)},
\end{equation}
it follows
\begin{equation*}
 \int_V |u_1-u_2|^{\frac{p}{p-1}}d\mu\leq \Big(\int_V \frac{1}{h^{1/(p-2)}}d\mu\Big)^{\frac{p-2}{p-1}}\Big(\int_V h|u_1-u_2|^pd\mu\Big)^{\frac{1}{p-1}}\leq C_{G,\delta,p,h}\|u_1-u_2\|_{\mathcal{H}}^{\frac{p}{p-1}}
\end{equation*}
Moreover,
$$\int_V |u_1-u_2||\xi|d\mu\leq \Big(\int_V |u_1-u_2|^{\frac{p}{p-1}}d\mu\Big)^{\frac{p-1}{p}}\|\xi\|_{\mathcal{H}}
 \ .$$
Thus, we have
$$|(K'(u_1)-K'(u_2))\xi|\leq C_{G,h,p,g,\delta,\theta}\|\xi\|_{\mathcal{H}}\|u_1-u_2\|_{\mathcal{H}} \ .$$
Therefore
$K':\mathcal{H}\rightarrow\mathcal{H^*}$, the Frechet derivative of $K$ satisfies
$$\|K'(u_1)-K'(u_2)\|_{\mathcal{H^*}}\leq C_{G,h,p,g,\delta,\theta}\|u_1-u_2\|_{\mathcal{H}}\ , $$
This implies that $K'$ is continuous on $\mathcal{H}$, i.e. $K$ is continuously differentiable on $\mathcal{H}$.\quad\quad$\Box$
\end{proof}

Now, we consider the functional $J(u)$ under the constraint $K(u)=1$. Since $J(u)\geq0$,
$$\gamma=inf\{J(u): u\in\mathcal{H},\ K(u)=1\}$$
is well defined. Obviously, $\gamma\geq0$. We can also choose a sequence $\{u_i\}_{i\geq1}$ in $\mathcal{H}$ with $J(u_i)\rightarrow\gamma$, $J(u_i)<\gamma+1$ and $K(u_i)=1$.
Denote  $C(G, h)=\inf_{x\in V}h(x)\mu(x)>0$. At each vertex $x\in V$, we have
\begin{equation}\label{BD}
C(G, h)\big|u_i(x)\big|^p\leq h(x)\mu(x)\big|u_i(x)\big|^p\leq \int_V h|u_i|^pd\mu<J(u_i)\leq \gamma+1.
\end{equation}
This means $\big|u_i(x)\big|\leq C_{G,h,p,\gamma}$ for all $x\in V$ and all $i\geq1$. In other words, $\{u_i\}_{i \geq 1}$ are uniformly bounded.
Hence, there exists some $\bar{u}$ such that up to a
subsequence, $u_i \rightarrow \bar{u}$ on $V$. We may well denote this subsequence as $u_i$. Because $G$ is locally finite, $|\nabla_p u_i|\rightarrow|\nabla_p \bar{u}|$ at each vertex $x$.
According to Fatou's lemma, we obtain
\begin{equation}\label{bound}
\int_V(h|\bar{u}|^p+|\nabla_p \bar{u}|^p)d\mu \leq\gamma,
\end{equation}
\begin{equation}\label{2}
K(\bar{u})=\int_V G(x,\bar{u})d\mu\leq 1,
\end{equation}
which implies $\bar{u}\in\mathcal{H}$.

\begin{claim}
$\bar{u}$, as above, is not identically zero on $V$.
\end{claim}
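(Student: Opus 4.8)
The plan is to argue by contradiction: suppose $\bar u\equiv 0$ on $V$, and derive a contradiction with $K(u_i)=1$. The decisive ingredient is a tail estimate for the functional $K$ that is \emph{uniform in $i$}; granting this, the pointwise convergence $u_i\to\bar u=0$ on any finite subset of $V$ will force $K(u_i)\to0$, which is absurd.

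First I would collect the two facts on which the uniform tail estimate rests. Since $2<\alpha\le p$ forces $p>2$, the exponent $\frac1{p-2}$ is finite, and combining the pointwise inequality (\ref{gj}) with $h^{-1}\in L^\delta(V)$ shows $h^{-1/(p-2)}\in L^1(V)$, i.e. $\int_V h^{-1/(p-2)}d\mu<\infty$. Second, from the uniform bound (\ref{BD}) we have $|u_i(x)|\le C_0:=C_{G,h,p,\gamma}$ for all $i$ and all $x$, and since $p\ge2$ gives $\frac{p}{p-1}\le 2<\alpha$, we may write $|u_i(x)|^\alpha=|u_i(x)|^{p/(p-1)}\,|u_i(x)|^{\alpha-\frac{p}{p-1}}$ and absorb the last factor into the constant (using $|u_i(x)|\le C_0$ and $\alpha-\frac p{p-1}>0$), obtaining $0\le G(x,u_i(x))\le g(x)\theta|u_i(x)|^\alpha\le C_1\,|u_i(x)|^{p/(p-1)}$ at every $x$, where $C_1$ depends only on $g,\alpha,\theta,C_0$ and $g$ is bounded from above.

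Now fix an exhaustion $V=\bigcup_{N\ge1}\Omega_N$ by finite sets. On $V\setminus\Omega_N$, Hölder's inequality with exponents $\frac{p-1}{p-2}$ and $p-1$, applied to $|u_i|^{p/(p-1)}=h^{-1/(p-1)}(h|u_i|^p)^{1/(p-1)}$, yields
$$\int_{V\setminus\Omega_N}|u_i|^{p/(p-1)}d\mu\le\Big(\int_{V\setminus\Omega_N}h^{-1/(p-2)}d\mu\Big)^{\frac{p-2}{p-1}}\Big(\int_{V\setminus\Omega_N}h|u_i|^p\,d\mu\Big)^{\frac1{p-1}}\le\Big(\int_{V\setminus\Omega_N}h^{-1/(p-2)}d\mu\Big)^{\frac{p-2}{p-1}}(\gamma+1)^{\frac1{p-1}},$$
since $\int_{V\setminus\Omega_N}h|u_i|^p\,d\mu\le J(u_i)<\gamma+1$. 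The right-hand side is the tail of a convergent series, hence tends to $0$ as $N\to\infty$ uniformly in $i$; combined with the pointwise bound on $G$ above, this gives $\varepsilon_N:=\sup_{i\ge1}\int_{V\setminus\Omega_N}G(x,u_i)\,d\mu\to0$ as $N\to\infty$.

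To conclude, write $1=K(u_i)=\int_{\Omega_N}G(x,u_i)\,d\mu+\int_{V\setminus\Omega_N}G(x,u_i)\,d\mu\le\int_{\Omega_N}G(x,u_i)\,d\mu+\varepsilon_N$. Choosing $N$ with $\varepsilon_N<\frac12$, we get $\int_{\Omega_N}G(x,u_i)\,d\mu\ge\frac12$ for all $i$. But $\Omega_N$ is finite, $u_i(x)\to\bar u(x)=0$ for each $x\in\Omega_N$, and $s\mapsto G(x,s)$ is continuous with $G(x,0)=0$, so $\int_{\Omega_N}G(x,u_i)\,d\mu\to0$ as $i\to\infty$ — a contradiction. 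Therefore $\bar u$ is not identically zero on $V$. The only genuine difficulty here is the uniform tail estimate, which is precisely the point at which the sharp hypothesis $\delta<\frac1{p-2}$ (via $h^{-1/(p-2)}\in L^1$) is used; the remaining steps are routine.
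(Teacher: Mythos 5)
Your proof is correct, and its core mechanism is the same as the paper's: a tail estimate for $\int G(x,u_i)\,d\mu$ outside a finite set, uniform in $i$, obtained from H\"older's inequality together with the integrability of a negative power of $h$ (via (\ref{gj}) and $h^{-1}\in L^{\delta}(V)$), combined with pointwise convergence $u_i\to\bar u$ on the finite core. The difference is organizational but worth noting. The paper splits into two cases: for $p>\alpha$ it applies H\"older directly to $u_i^{\alpha}=h^{-\alpha/p}(hu_i^p)^{\alpha/p}$, which requires $h^{-\alpha/(p-\alpha)}\in L^1$, and only for $p=\alpha$ does it first use the uniform bound $|u_i|\le C_0$ from (\ref{BD}) to lower the exponent to $\tfrac{p}{p-1}$. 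You observed that this second device works for every $2<\alpha\le p$, since $\alpha>2\ge\tfrac{p}{p-1}$, so a single H\"older application with exponents $\tfrac{p-1}{p-2}$ and $p-1$ covers both cases; this is a genuine streamlining that also makes transparent exactly where the hypothesis $\delta<\tfrac1{p-2}$ enters. One caveat: you framed the argument as a contradiction and so proved only the literal statement $\bar u\not\equiv0$, whereas the paper's direct version of the same estimate yields the stronger conclusion $K(\bar u)=1$, which is what is actually needed in Claim 3 to run the Lagrange-multiplier computation under the constraint $K(\bar u)=1$. Your machinery delivers this at no extra cost --- from $\int_{\Omega_N}G(x,u_i)\,d\mu\ge 1-\varepsilon_N$ and pointwise convergence on the finite set $\Omega_N$ one gets $K(\bar u)\ge 1-\varepsilon_N$ for every $N$, hence $K(\bar u)\ge1$, and equality follows from (\ref{2}) --- so you should state that conclusion explicitly rather than stopping at nonvanishing.
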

\begin{proof}
Let $x_0 \in V$ be fixed. For any $\epsilon>0$, $h^{-1}\in L^{\delta}(V)$ for some $0<\delta<\frac{1}{p-2}$, there exists some $R>0$ such that
\begin{equation}\label{control}
\Big(\int_{dist(x,x_0)>R} \frac {1}{h^{\delta}}d\mu\Big)^{\delta}\leq {\epsilon}.
\end{equation}
Denote $\inf_{x\in V}h(x)=C_h>0$, and we discuss in two cases. First if $p>\alpha>2$, we gain $\frac{\alpha}{p-\alpha}-\delta>0$. Hence, for any $x\in V$
$$\frac{1}{h^{{\alpha}/(p-\alpha)}(x)}\leq \frac{1}{h^{\delta}(x)}\Big(\frac{1}{\inf_{x\in V}h(x)}\Big)^{\frac{\alpha}{p-\alpha}-\delta'}\leq C_{\alpha,\delta,p,h}\frac{1}{h^{\delta}(x)}.$$
As $g$ is bounded from above and  $\|u_i\|_{\mathcal{H}}=J(u_i)^{\frac{1}{p}}$
, using the definition of $G(x,s)$ and the inequality ((\ref{BD})), we obtain
\begin{align*}
\int_{dist(x,x_0)>R}G(x,u_i)d\mu\leq& C_{\alpha,g,\theta}\int_{dist(x,x_0)>R,u_i(x)>0}u_i^{\alpha}d\mu
\\ \leq& C_{\alpha,g,\theta} \Big(\int_{dist(x,x_0)>R,u_i(x)>0}\frac{1}{h^{{\alpha}/(p-\alpha)}}d\mu\Big)^{\frac{p-\alpha}{\alpha}}
\|u_i\|^{\alpha}_{\mathcal{H}}
\\ \leq&C_{\alpha,g,\delta,p,h,\theta}\ {\epsilon}^{\frac{(p-\alpha)\delta}{\alpha}} \|u_i\|^{\alpha}_{\mathcal{H}}
\\ \leq&C_{\alpha,g,\delta,p,h,\theta}\ {\epsilon}^{\frac{(p-\alpha)\delta}{\alpha}}(\gamma+1)^{\frac{\alpha}{p}}.
\end{align*}
Using $K(u_i)=1$, for $p>\alpha>2$, we have
\begin{equation}\label{p>}
  \int_{dist(x,x_0)\leq R}G(x,u_i)d\mu\geq1-C_{\alpha,g,\delta,p,h,\theta}\ {\epsilon}^{\frac{p-\alpha}{\alpha\delta}}  (\gamma+1)^{\frac{\alpha}{p}}.
\end{equation}
Then let $i\rightarrow\infty$, and we have
$$K(\bar{u})=\int_VG(x,\bar{u})d\mu\geq \int_{dist(x,x_0)\leq R}G(x,\bar{u})d\mu\geq1-C_{\alpha,g,\delta,p,h,\theta}\ {\epsilon}^{\frac{p-\alpha}{\alpha\delta}}  (\gamma+1)^{\frac{\alpha}{p}}.$$
Let $\epsilon\rightarrow0$, we obtain $K(\bar{u})\geq1$.\par
Then in the case of $p=\alpha$, because $\{u_i\}_{i \geq 1}$ are uniformly bounded, using (\ref{gj}), we get
\begin{align*}
  \int_{dist(x,x_0)>R}G(x,u_i)d\mu\leq& C_{g,\theta}\int_{dist(x,x_0)>R,u_i(x)>0}u_i^pd\mu\\ \leq&C_{p,g,\theta}\int_{dist(x,x_0)>R,u_i(x)>0}u_i^{\frac{p}{p-1}}d\mu
\end{align*}
Moreover, using (\ref{gj}), (\ref{BD}) and (\ref{control}), we obtain
\begin{align*}
  \int_{dist(x,x_0)>R} u_i^{\frac{p}{p-1}}d\mu\leq& \Big(\int_{dist(x,x_0)>R}\frac{1}{h^{1/(p-2)}}d\mu\Big)^{\frac{p-2}{p-1}}
  \Big(\int_{dist(x,x_0)>R}hu_i^p d\mu\Big)^{\frac{1}{p-1}}\\ \leq&
   C_{G,\delta,p,h}{\epsilon}^{\frac{(p-2)\delta}{p-1}}\|u_i\|_{\mathcal{H}}^{\frac{p}{p-1}}\\ \leq&
   C_{G,\delta,p,h}{\epsilon}^{\frac{(p-2)\delta}{p-1}}(\gamma+1)^{\frac{1}{p-1}}.
\end{align*}
Then, for $p=\alpha>2$, using $K(u_i)=1$, we have
\begin{equation}\label{p=}
  \int_{dist(x,x_0)\leq R}G(x,u_i)d\mu\geq1- C_{G,\delta,p,h}{\epsilon}^{\frac{(p-2)\delta}{p-1}}(\gamma+1)^{\frac{1}{p-1}}.
\end{equation}
Let $i\rightarrow\infty$ again, we have
$$K(\bar{u})=\int_VG(x,\bar{u})d\mu\geq \int_{dist(x,x_0)\leq R}G(x,\bar{u})d\mu\geq1- C_{G,\delta,p,h}{\epsilon}^{\frac{(p-2)\delta}{p-1}}(\gamma+1)^{\frac{1}{p-1}}.$$
Let $\epsilon\rightarrow0$, we obtain $K(\bar{u})\geq1$.  In a word, we obtain $K(\bar{u})\geq1$ for $p\geq\alpha>2$. By (\ref{2}), we get $K(\bar{u})=1$, which implies that $\bar{u}$ is not identically zero.\quad\quad$\Box$
\end{proof}

\begin{claim}\label{positive}
$\bar{u}$, as above, is positive everywhere on $V$.
\end{claim}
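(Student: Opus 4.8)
The plan is to recognise $\bar u$ as an actual minimiser of the constrained problem, to derive the Euler--Lagrange equation it must satisfy, and to read off positivity from that equation together with the connectedness of $G$. First, observe that (\ref{bound}) gives $J(\bar u)\le\gamma$, while Claim~2 (with (\ref{2})) gives $K(\bar u)=1$ and $\bar u\not\equiv 0$; hence $\bar u$ is admissible and $J(\bar u)=\gamma$, and in particular $\gamma=J(\bar u)>0$ since otherwise $\bar u\equiv 0$ by (\ref{BD}). Next I would set $w=|\bar u|$: from $\big||a|-|b|\big|\le|a-b|$ one has $|\nabla_p w(x)|\le|\nabla_p\bar u(x)|$ at every $x$, and $G(x,w(x))\ge G(x,\bar u(x))$ because $G(x,\cdot)$ depends only on the positive part of its argument; hence $J(w)\le\gamma$ and $K(w)\ge 1$, so $\tilde w:=K(w)^{-1/\alpha}w$ is admissible with $J(\tilde w)\le\gamma$, which forces $J(\tilde w)=\gamma$. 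Thus $\tilde w\ge 0$ is a minimiser, and after renaming we may assume from now on that $\bar u\ge 0$.

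The next step is to derive the Euler--Lagrange equation. By (\ref{BD}) one has $\sup_{x\in V}|u(x)|\le C(G,h)^{-1/p}\|u\|_{\mathcal{H}}$, so a sufficiently small $\mathcal{H}$-ball about $\bar u$ lies in $\{|u|\le C\}$; hence $K$ is $C^1$ near $\bar u$ by Claim~1, while $J$ is $C^1$ on $\mathcal{H}$. Since $K'(\bar u)(\bar u)=\int_V G'(x,\bar u)\bar u\,d\mu=\alpha K(\bar u)=\alpha\neq 0$, we have $K'(\bar u)\neq 0$, so the Lagrange multiplier rule yields $\lambda\in\mathbb{R}$ with $J'(\bar u)=\lambda K'(\bar u)$. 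Using $\omega_{xy}=\omega_{yx}$ and summation by parts on $G$,
\begin{equation*}
J'(\bar u)(v)=p\int_V\big(-\Delta_p\bar u+h|\bar u|^{p-2}\bar u\big)v\,d\mu,\qquad K'(\bar u)(v)=\alpha\theta\int_V g\,\bar u^{\alpha-1}v\,d\mu
\end{equation*}
for all $v\in\mathcal{H}$; taking $v=\bar u$ gives $p\gamma=pJ(\bar u)=\lambda\alpha K(\bar u)=\lambda\alpha$, so $\lambda=p\gamma/\alpha>0$, and $\bar u$ satisfies pointwise
\begin{equation*}
-\Delta_p\bar u(x)+h(x)\bar u(x)^{p-1}=\gamma\theta\,g(x)\,\bar u(x)^{\alpha-1},\qquad x\in V .
\end{equation*}

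To conclude, I would suppose for contradiction that $\bar u(x_0)=0$ for some $x_0\in V$. Evaluating the equation just obtained at $x_0$ and using $\bar u\ge 0$, the terms $h(x_0)\bar u(x_0)^{p-1}$ and $\gamma\theta g(x_0)\bar u(x_0)^{\alpha-1}$ vanish, so $\Delta_p\bar u(x_0)=0$; that is,
\begin{equation*}
\frac{1}{\mu(x_0)}\sum_{y\sim x_0}\omega_{x_0y}\,\bar u(y)^{p-1}=0 .
\end{equation*}
Every summand is nonnegative and $\omega_{x_0y}>0$ whenever $y\sim x_0$, so $\bar u(y)=0$ for each neighbour $y$ of $x_0$; iterating this along paths and using that $G$ is connected shows $\bar u\equiv 0$ on $V$, contradicting Claim~2. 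Hence $\bar u>0$ everywhere on $V$.

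The step I expect to be the main obstacle is the derivation of the Euler--Lagrange equation: one must check that the Fr\'{e}chet derivatives of $J$ and $K$ really have the displayed integral forms in this infinite-graph $L^p$ framework (the relevant series converging because $\bar u\in\mathcal{H}$, $\bar u$ is bounded, and $h^{-1}\in L^{\delta}(V)$), and that the abstract Lagrange multiplier theorem is applicable to minimisation of $J$ on the level set $\{K=1\}$ in $\mathcal{H}$. Once that identity is in hand, the positivity step is a short maximum-principle argument using only $\bar u\ge 0$, $\omega_{xy}>0$ on edges, and the connectedness of $G$.
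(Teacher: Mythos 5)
Your proof is correct, and it reaches the conclusion by a partly different route from the paper. The derivation of the Euler--Lagrange equation and the final strict-positivity step (if $\bar u(x_0)=0$ then $\Delta_p\bar u(x_0)=0$ forces all neighbours to vanish, and connectedness propagates the zero) coincide with what the paper does; in fact you are more careful than the paper in justifying the multiplier rule, checking $K'(\bar u)\neq 0$ via $K'(\bar u)(\bar u)=\alpha K(\bar u)=\alpha$ and identifying $\lambda=p\gamma/\alpha>0$ explicitly from $\gamma=J(\bar u)>0$. Where you genuinely diverge is the nonnegativity step: you replace $\bar u$ by $|\bar u|$, observe $J(|\bar u|)\le J(\bar u)$ and $K(|\bar u|)\ge 1$ from the triangle inequality and the one-sided definition of $G$, and rescale by $K(|\bar u|)^{-1/\alpha}\le 1$ to land back on the constraint set without increasing $J$; the paper instead keeps the original $\bar u$ and rules out $\bar u(x)<0$ by a maximum-principle/infinite-descent argument (a strictly decreasing chain $\bar u(x_1)>\bar u(x_2)>\cdots$ of negative values along distinct vertices would make $\int_V h|\bar u|^p\,d\mu$ diverge since $\inf h\mu>0$). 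Your truncation argument is cleaner and avoids that descent entirely; its only cost is that it establishes positivity for a possibly different minimiser rather than for the originally extracted $\bar u$ — harmless for Theorem \ref{theorem2}, which only needs some positive solution, but worth flagging since the claim as stated refers to ``$\bar u$, as above.'' Both approaches rely on the same pointwise sup bound $\sup_V|u|\le C(G,h)^{-1/p}\|u\|_{\mathcal{H}}$ from (\ref{BD}) to keep $K$ continuously differentiable near the minimiser.
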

\begin{proof}
From (\ref{bound}), we can prove $|\bar{u}|\leq C_{G,h,p,\gamma}$ which is totally similar to prove $|u_{i}(x)|\leq C_{G,h,p,\gamma}$. Then $\big|\bar{u}+t\varphi\big|\leq C_{G,h,p,\gamma}$ as $t\rightarrow0$, for any $\varphi\in \mathcal{H}$.
 We calculate the Euler-Lagrange equation at $\bar{u}$ under the constraint condition $K(\bar{u})=1$. For any $\varphi\in \mathcal{H}$, there holds
\begin{align*}
0=&\frac{d}{dt}\Big|_{t=0}\Big\{J(\bar{u}+t\varphi)-\lambda\Big(\int_V G(x,\bar{u}+t\varphi)d\mu-1\Big)\Big\}\\=&\int_V \Big(-p\Delta_p\bar{u}+ph|\bar{u}|^{p-2}\bar{u}-\lambda G'(x,\bar{u})\Big)\varphi d\mu.
\end{align*}
Hence, we get
\begin{equation}\label{EL}
-p\Delta_p\bar{u}+ph|\bar{u}|^{p-2}\bar{u}=\lambda G'(x,\bar{u}).
\end{equation}
Noting that
$$\int_V (-\bar{u}\Delta_p\bar{u})d\mu=\int_V |\nabla_p u|^p d\mu,$$
 multiplying $\bar{u}$ on both sides of the equation (\ref{EL}), and taking integration, we can see $\lambda>0$. If $\bar{u}(x)<0$, at some vertex $x\in V$, then by the equation $(\ref{EL})$, we see
 $$\Delta_p \bar{u}(x)<0.$$
However, by the definition of $\Delta_p$, there is a $y\thicksim x$ with $\bar{u}(y)<\bar{u}(x)<0$. In view of the connectedness of the graph $G=(V,E)$, by inductively, we obtain a sequence $x=x_1\thicksim x_2\thicksim x_3\thicksim\cdots$, such that $$\bar{u}(x_i)<\bar{u}(x_{i-1})<\cdots<\bar{u}(x_1)<0.$$
This is impossible because $\mu(x)\geq\mu_{min}>0$, $\inf_{x\in V}h(x)>0$, and $h|\hat{u}|^p$ is integrable. Hence $\bar{u}$ is nonnegative on $V$.
If $\bar{u}$ is not positive everywhere on $V$, we can always find two vertices $x,y$ with $y\thicksim x$, $\bar{u}(x)=0$, $\bar{u}(y)>0.$ Then it follows $\Delta_p \bar{u}(x)>0$ by the definition of $\Delta_p$, which contradicts to the equation (\ref{EL}). Hence $\bar{u}$ is positive everywhere on $V$.\quad\quad$\Box$
\end{proof}

\begin{claim}
The equation (\ref{equation2}) has a strictly positive solution.
\end{claim}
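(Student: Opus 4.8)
The plan is to recover a solution of (\ref{equation2}) from the minimizer $\bar u$ by exploiting the homogeneity of $\Delta_p$. By Claim \ref{positive}, $\bar u$ is strictly positive on $V$ and satisfies the Euler--Lagrange equation (\ref{EL}); since $\bar u>0$ we have $|\bar u|^{p-2}\bar u=\bar u^{p-1}$ and $G'(x,\bar u)=\alpha g\theta\,\bar u^{\alpha-1}$, so dividing (\ref{EL}) by $p$ gives
$$-\Delta_p\bar u+h\bar u^{p-1}=\beta\, g\,\bar u^{\alpha-1},\qquad \beta:=\frac{\lambda\alpha\theta}{p}>0 .$$
Thus $\bar u$ already solves (\ref{equation2}) up to the positive constant $\beta$ in front of the nonlinearity, and the remaining task is to normalise $\beta$ to $1$.

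Suppose first that $2<\alpha<p$. Both $\Delta_p$ and $u\mapsto h|u|^{p-2}u$ are positively $(p-1)$-homogeneous, while $u\mapsto gu^{\alpha-1}$ is positively $(\alpha-1)$-homogeneous. I would therefore set
$$u:=\beta^{\frac{1}{\alpha-p}}\,\bar u>0$$
and rewrite the displayed identity in terms of $u$ using these homogeneities; the coefficient of $gu^{\alpha-1}$ then becomes $\beta\cdot\big(\beta^{1/(\alpha-p)}\big)^{p-\alpha}=\beta\cdot\beta^{-1}=1$, so that $-\Delta_pu+h|u|^{p-2}u=gu^{\alpha-1}$ on $V$. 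Since $u$ is strictly positive by construction (cf. Claim \ref{positive}), this proves the statement when $\alpha<p$.

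The case $\alpha=p$ is the main obstacle: then both sides of $-\Delta_p\bar u+h\bar u^{p-1}=\beta g\bar u^{p-1}$ have the same degree of homogeneity, so no rescaling of $\bar u$ can change $\beta$. The natural remedy is to fix the free parameter $\theta$ from the outset so that $\beta=1$. To this end one would first compute $\beta$ explicitly: multiplying (\ref{EL}) by $\bar u$, integrating over $V$, and using $\int_V(-\bar u\Delta_p\bar u)\,d\mu=\int_V|\nabla_p\bar u|^p\,d\mu$ together with the constraint $K(\bar u)=1$ gives $\beta=\theta\,J(\bar u)=\theta\,\gamma$; one then records how the minimum $\gamma=\gamma(\theta)$ scales with $\theta$ (replacing a competitor $u$ by $tu$ multiplies $J$ by $t^{p}$ and the constraint functional by $t^{\alpha}$) and solves the relation $\theta\,\gamma(\theta)=1$ for $\theta$. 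With that value of $\theta$, the minimizer $\bar u$ furnished by the earlier claims is itself a strictly positive solution of (\ref{equation2}). Making this admissible choice of $\theta$ precise --- equivalently, controlling the dependence of the Lagrange multiplier $\lambda$ on $\theta$ --- is the delicate step, and it is where I would expect the proof to demand the most care.
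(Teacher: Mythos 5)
For $2<\alpha<p$ your argument is exactly the paper's: taking $\theta=1$, the Euler--Lagrange equation (\ref{EL}) becomes $-\Delta_p\bar u+h\bar u^{p-1}=\beta g\bar u^{\alpha-1}$ with $\beta=\lambda\alpha/p>0$, and your substitution $u=\beta^{1/(\alpha-p)}\bar u$ is precisely the paper's $u=\bigl(\tfrac{p}{\alpha\lambda}\bigr)^{1/(p-\alpha)}\bar u$ in (\ref{change2}); the mismatch between $(p-1)$- and $(\alpha-1)$-homogeneity absorbs the constant. That half is complete and correct.

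For $\alpha=p$ you stop at what you call the delicate step, and that is a genuine gap in your proposal --- but it is worth recording that the paper does not close it either. The paper writes ``choose $\theta=1/\lambda$'', which is circular: $\lambda$ is the multiplier of the minimization problem whose constraint already depends on $\theta$. Your own computation, carried one line further, shows that no choice of $\theta$ can work in general. Pairing (\ref{EL}) with $\bar u$ and using $K(\bar u)=1$ gives $pJ(\bar u)=\lambda\alpha$, so $\lambda=\gamma(\theta)$ when $p=\alpha$; the scaling $u\mapsto\theta^{-1/\alpha}u$ identifies the constraint sets for different $\theta$ and yields $\gamma(\theta)=\theta^{-1}\gamma(1)$; hence the coefficient $\beta=\theta\lambda(\theta)=\gamma(1)$ is \emph{independent} of $\theta$, and the minimizer always satisfies $-\Delta_p\bar u+h\bar u^{p-1}=\gamma(1)\,g\,\bar u^{p-1}$. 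Since both sides are now $(p-1)$-homogeneous, neither rescaling $\bar u$ nor tuning $\theta$ can replace $\gamma(1)$ by $1$: the relation $\theta\gamma(\theta)=1$ you propose to solve is solvable only in the degenerate case $\gamma(1)=1$. Indeed, for $\alpha=p$ the claim is an eigenvalue statement --- a positive solution in $\mathcal{H}$ forces $J(u)=\int_V gu^p\,d\mu$, i.e.\ the Rayleigh quotient defining $\gamma(1)$ takes the value $1$ --- so it cannot follow from this constrained minimization without an additional hypothesis on $g$ and $h$. Your instinct that this is where the proof ``demands the most care'' is correct; the care cannot be supplied by the $\theta$-trick at all.
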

By Claim $1,2,3$, we know that $\bar{u}$, as above, is positive everywhere on $V$, and it satisfies
\begin{equation}\label{change2}
-p\Delta_p\bar{u}+ph{\bar{u}}^{p-1}=\lambda\alpha\theta g \bar{u}^{\alpha-1}.
\end{equation}
If $p>\alpha>2$, choose $\theta=1$ and set $$u=\Big(\frac{p}{\alpha\lambda}\Big)^{\frac{1}{p-\alpha}}\bar{u}$$ in (\ref{change2}), then we have
\begin{equation}\label{Y-E}
 -\Delta_pu+hu^{p-1}=gu^{\alpha-1}.
\end{equation}
If $p=\alpha$, choose $\theta=\frac{1}{\lambda}$ and set $u=\bar{u}$ in  (\ref{change2}), we also obtain (\ref{Y-E}).
This implies that $u$ is a positive solution to the $p$-th Yamabe equation (\ref{equation2}).$\hfill\Box$

\noindent \textbf{Acknowledgements:} The first author would like to thank Professor Yanxun Chang for constant guidance and encouragement. The second author would like to thank Professor Gang Tian and Huijun Fan for constant encouragement and support. Both authors would also like to thank Professor Huabin Ge for many helpful conversations. The first author is supported by National Natural Science Foundation of China under Grant No. 11431003. The second author is supported by National Natural Science Foundation of China under Grant No. 11401578.

Xiaoxiao Zhang: xiaoxiaozhang0408@bjtu.edu.cn\\
Institute of Mathematics, Beijing Jiaotong University, Beijing 100044, P. R. China\\
E-mail address: xiaoxiaozhang0408@bjtu.edu.cn\\
\\
Aijin Lin: aijinlin@pku.edu.cn\\
College of Science, National University of Defense Technology, Changsha 410073, P. R. China\\
E-mail address: aijinlin@pku.edu.cn\\
\end{document}